\documentclass[a4paper,12pt]{article}
\usepackage{amssymb,amsmath,amsthm,latexsym}
\usepackage{amsfonts}
	\usepackage{amsfonts}
\usepackage{graphicx}
\usepackage[pdftex,bookmarks,colorlinks=false]{hyperref}
\usepackage{verbatim}
\usepackage{caption}
\usepackage{subcaption}

\newtheorem{theorem}{Theorem}[section]

\newtheorem{corollary}[theorem] {Corollary}
\newtheorem{definition}[theorem]{Definition}

\newtheorem{lemma} [theorem]{Lemma}

\newtheorem{problem}{Problem}[theorem]

\newtheorem{remark}[theorem]{Remark}

\setlength{\parskip}{2.5pt}

\title{A CHARACTERISATION OF WEAK INTEGER ADDITIVE SET-INDEXERS OF GRAPHS}

\author{{\bf N K Sudev \footnote{Department of Mathematics, Vidya Academy of Science \& Technology, Thalakkottukara, Thrissur - 680501, email: {\em sudevnk@gmail.com}}} and {\bf K A Germina\footnote{Department of Mathematics, School of Mathematical \& Physical Sciences, Central University of Kerala, Kasaragod, email:{\em srgerminaka@gmail.com}}}}

\date{}

\begin{document}

\maketitle

\begin{abstract}
Acharya introduced the notion of set-valuations of graphs as a set analogue of the number valuations  of graphs. Also, we have the notion of set-indexers, integer additive set-indexers, $k$-uniform integer additive set-indexers, weak and strong integer additive set-indexers, weakly and strongly uniform integer additive set-indexers, completely uniform integer additive set-indexers and arbitrarily uniform integer additive set-indexers based on the cardinality of the labeling sets. In this paper, we study further characteristics of the graphs which admit weak integer additive set-indexers as extensions to our earlier studies and provide some useful results on these types of set-indexers.
\end{abstract}

{\bf Keywords:} {\small Integer additive set-indexers, weakly uniform integer additive set-indexers, weak integer additive set-indexers, mono-indexed elements of a graph, sparing number of a graph.}\\
{\bf AMS Classification: 05C78}


\section{Introduction}\label{Introduction}
For all  terms and definitions, not defined specifically in this paper, we refer to \cite{FH}. Unless mentioned otherwise, all graphs considered here are simple, finite and have no isolated vertices.

The research in graph labeling problems commenced with the introduction of {\em $\beta$-valuations} in \cite{AR}. As a set analogue of number valuations of graphs, set-valuations are introduced in \cite{A1}, as follows. Let $G$ be a $(p,q)$-graph and let $X$, $Y$,$Z$ be non-empty sets. Then the functions $f:V(G)\to 2^X$, $f:E(G)\to 2^Y$ and $f:V(G)\cup E(G)\to 2^Z$ are called the {\em  set-assignments} of vertices, edges and elements of $G$ respectively. By a set-assignment of a graph, we mean any one of them.  A set-assignment is called a {\em set-labeling} or {\em set-valuation} if it is injective. A graph with a set-labeling $f$ is denoted by $(G,f)$  and is called a {\em set-labeled graph}.

For a $(p,q)$- graph $G=(V,E)$ and a non-empty set $X$ of cardinality $n$, a {\em set-indexer} of $G$ is defined in \cite{A1} as an injective set-valued function $f:V(G) \rightarrow2^{X}$ such that the function $f^{\oplus}:E(G)\rightarrow2^{X}-\{\emptyset\}$ defined by $f^{\oplus}(uv) = f(u ){\oplus}f(v)$ for every $uv{\in} E(G)$ is also injective, where $2^{X}$ is the set of all subsets of $X$ and $\oplus$ is the symmetric difference of sets.

\begin{theorem}
\cite{A1} Every graph has a set-indexer.
\end{theorem}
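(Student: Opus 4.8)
The plan is to reduce the statement to the complete graph and then exhibit an explicit chain-type labeling. First I would observe that it suffices to construct a set-indexer of the complete graph $K_p$ on $p$ vertices. Indeed, any simple graph $G$ with $|V(G)|=p$ is (isomorphic to) a spanning subgraph of $K_p$, and if $f$ is a set-indexer of $K_p$, then restricting $f$ to $V(G)$ keeps it injective, while restricting $f^{\oplus}$ to $E(G)\subseteq E(K_p)$ keeps it injective and keeps its values inside $2^X-\{\emptyset\}$. Hence $f$ restricts to a set-indexer of $G$, and it is enough to handle $K_p$.

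Next, for $K_p$ with vertex set $\{v_1,v_2,\ldots,v_p\}$, I would take $X=\{1,2,\ldots,p\}$ and define the nested labeling $f(v_i)=\{1,2,\ldots,i\}$ for $1\le i\le p$, so that $\{1\}=f(v_1)\subsetneq f(v_2)\subsetneq\cdots\subsetneq f(v_p)=X$. Injectivity of $f$ is immediate, since the cardinalities $|f(v_i)|=i$ are pairwise distinct. For the induced edge function, consider an edge $v_iv_j$ with $i<j$; then $f^{\oplus}(v_iv_j)=f(v_i)\oplus f(v_j)=\{i+1,i+2,\ldots,j\}$, a non-empty block of consecutive integers whose least element is $i+1$ and whose greatest element is $j$. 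Thus the unordered pair $\{i,j\}$ is recovered from the set $f^{\oplus}(v_iv_j)$, so $f^{\oplus}$ is injective and never equals $\emptyset$. Therefore $f$ is a set-indexer of $K_p$, and by the reduction above every graph has a set-indexer.

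The only point that genuinely needs care — the ``main obstacle'', such as it is — is the injectivity of $f^{\oplus}$: one must choose the vertex labels so that all pairwise symmetric differences are forced to be distinct. The chain construction does this transparently, because the symmetric difference of two nested sets is an ``interval'' that encodes its own endpoints; any labeling whose sets are totally ordered by inclusion would serve equally well. (Alternatively, one could shift to $f(v_i)=\{1,\ldots,i-1\}$ over $X=\{1,\ldots,p-1\}$; the verification is identical, the only difference being that then $f(v_1)=\emptyset$.)
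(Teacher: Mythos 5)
The paper states this theorem only as a citation to Acharya's lecture notes and gives no proof of its own, so there is nothing internal to compare your argument against; judged on its own terms, your proof is correct. The reduction to $K_p$ via restriction is sound (injectivity of both $f$ and $f^{\oplus}$ is inherited by subgraphs, and the codomain condition $f^{\oplus}(e)\neq\emptyset$ is likewise preserved), and the chain labeling $f(v_i)=\{1,\dots,i\}$ does yield $f^{\oplus}(v_iv_j)=\{i+1,\dots,j\}$ for $i<j$, from which the pair $\{i,j\}$ is recovered as $\{\min-1,\max\}$. One remark: the reduction to $K_p$ and the nesting are both more machinery than needed, since the even simpler labeling $f(v_i)=\{i\}$ by distinct singletons already works for any graph directly --- here $f^{\oplus}(v_iv_j)=\{i\}\oplus\{j\}=\{i,j\}$, which is non-empty and visibly determines the edge. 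Your version buys nothing extra over that, but it is complete and correct, and the observation that any family of labels totally ordered by inclusion works is a nice additional point.
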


\begin{definition}\label{D1}{\rm
Let $\mathbb{N}_0$ denote the set of all non-negative integers. For all $A, B \subseteq \mathbb{N}_0$, the sum of these sets is denoted by  $A+B$ and is defined by $A + B = \{a+b: a \in A, b \in B\}$}.
\end{definition}

\begin{definition}\label{D2}{\rm
\cite{GA} An {\em integer additive set-indexer} (IASI, in short) is defined as an injective function $f:V(G)\rightarrow 2^{\mathbb{N}_0}$ such that the induced function $g_f:E(G) \rightarrow 2^{\mathbb{N}_0}$ defined by $g_f (uv) = f(u)+ f(v)$ is also injective}.
\end{definition}

\begin{definition}\label{D3}{\rm
\cite{GS1} The cardinality of the labeling set of an element (vertex or edge) of a graph $G$ is called the {\em set-indexing number} of that element.}
\end{definition}

\begin{definition}\label{DU}{\rm
\cite{GA} An IASI is said to be {\em $k$-uniform} if $|g_f(e)| = k$ for all $e\in E(G)$. That is, a connected graph $G$ is said to have a $k$-uniform IASI if all of its edges have the same set-indexing number $k$. In particular, we say that a graph $G$ has an {\em arbitrarily $k$-uniform IASI} if $G$ has a $k$-uniform IASI  for every positive integer $k$.}
\end{definition}

The characteristics of a special type of 1-uniform IASI graphs, called {\em sum square graphs}, has been studied in \cite{GA}. A characterisation of 2-uniform IASI graphs has been done in \cite{TMKA}. As a result, the following theorem was proved.

\begin{theorem}\label{T-2U}
\cite{TMKA} A graph $G$ admits a 2-uniform IASI if and only if it is bipartite.
\end{theorem}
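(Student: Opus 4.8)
The plan is to first pin down exactly which set-indexing numbers can occur at the vertices of a graph carrying a $2$-uniform IASI, and then to read off bipartiteness from that information; the converse will be handled by an explicit construction.

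\textbf{The ``only if'' direction.} Suppose $f$ is a $2$-uniform IASI of $G$. The key preliminary fact, which I would record as a small lemma, is purely about sumsets in $\mathbb{N}_0$: for finite non-empty $A,B\subseteq\mathbb{N}_0$ one has $|A+B|\ge\max\{|A|,|B|\}$ (fix one element of $B$ and translate $A$), and if moreover $|A|\ge 2$ and $|B|\ge 2$ then $|A+B|\ge 3$ (taking $a_1<a_2$ in $A$ and $b_1<b_2$ in $B$, the three elements $a_1+b_1<a_1+b_2<a_2+b_2$ all lie in $A+B$). Applying this to an arbitrary edge $uv$, the condition $|f(u)+f(v)|=2$ forces $|f(u)|,|f(v)|\le 2$ and rules out $|f(u)|=|f(v)|=2$; and $|f(u)|=|f(v)|=1$ is impossible because it would give $|f(u)+f(v)|=1$. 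Hence $\{|f(u)|,|f(v)|\}=\{1,2\}$ for every edge $uv$. Now set $V_1=\{v\in V(G):|f(v)|=1\}$ and $V_2=\{v\in V(G):|f(v)|=2\}$; these partition $V(G)$, and every edge joins $V_1$ with $V_2$, so $G$ is bipartite.

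\textbf{The ``if'' direction.} Let $G$ be bipartite and fix a bipartition $(V_1,V_2)$ with $V_1=\{u_1,\dots,u_m\}$ and $V_2=\{w_1,\dots,w_n\}$. Define $f(u_i)=\{i\}$ for $1\le i\le m$ and $f(w_j)=\{0,j\}$ for $1\le j\le n$. Then $f$ maps $V(G)$ into $2^{\mathbb{N}_0}$ injectively, since the singletons $\{i\}$ are pairwise distinct, the doubletons $\{0,j\}$ are pairwise distinct, and a singleton never equals a $2$-element set. For an edge $u_iw_j$ we get $g_f(u_iw_j)=f(u_i)+f(w_j)=\{i,i+j\}$, which has cardinality $2$ because $j\ge 1$. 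Finally $g_f$ is injective: if $\{i,i+j\}=\{i',i'+j'\}$, then comparing smallest and largest entries gives $i=i'$ and $i+j=i'+j'$, whence $j=j'$ and the two edges coincide. Thus $f$ is a $2$-uniform IASI of $G$.

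\textbf{Where the work is.} The substantive ingredient is the elementary sumset inequality used in the forward direction; once that is in place, the $2$-colouring argument is immediate and the construction in the converse is routine to verify. The only loose end is the degenerate case in which $G$ has no edges, where $2$-uniformity holds vacuously and $G$, having no isolated vertices, is the null graph, which is trivially bipartite, so both implications remain valid.
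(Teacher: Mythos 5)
Your proof is correct. Note that the paper does not actually prove this statement: Theorem \ref{T-2U} is quoted from the cited reference (Anandavally) without proof, so there is no in-paper argument to compare against. Your argument is a sound, self-contained version of the standard one: the sumset bounds $|A+B|\ge\max(|A|,|B|)$ (which is the paper's Lemma \ref{L-Card}) together with $|A+B|\ge 3$ when $|A|,|B|\ge 2$ force $\{|f(u)|,|f(v)|\}=\{1,2\}$ on every edge, yielding the bipartition, and the explicit labelling $f(u_i)=\{i\}$, $f(w_j)=\{0,j\}$ with $g_f(u_iw_j)=\{i,i+j\}$ is correctly checked to be injective on vertices and edges. (You rightly use the standing assumption that graphs have no isolated vertices when asserting that $V_1\cup V_2$ exhausts $V(G)$.)
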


In the context of IASIs, in \cite {GS1}, we established the following lemma and introduced the notions following it.

\begin{lemma}\label{L-Card}
\cite{GS1} Let $A$ and $B$ be two non-empty finite subsets of $\mathbb{N}_0$. Then $max(|A|,|B|)\le |A+B|\le |A|.|B|$. Therefore, for an integer additive set-indexer $f$ of a graph $G$, we have $max(|f(u)|, |f(v)|)\le |g_f(uv)|= |f(u)+f(v)| \le |f(u)| |f(v)|$, where $u,v\in V(G)$.
\end{lemma}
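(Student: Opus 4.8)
The plan is to establish the two inequalities for arbitrary non-empty finite sets $A,B\subseteq\mathbb{N}_0$ separately, and then recover the statement about $g_f$ simply by specialising $A=f(u)$, $B=f(v)$ and invoking the definition $g_f(uv)=f(u)+f(v)$ from Definition \ref{D2}.

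For the upper bound, I would note that, by Definition \ref{D1}, every element of $A+B$ has the form $a+b$ with $a\in A$ and $b\in B$; hence $A+B$ is the image of the map $(a,b)\mapsto a+b$ defined on the product set $A\times B$. Since $|A\times B|=|A|\,|B|$ and the image of a finite set under any function has cardinality at most that of its domain, we conclude $|A+B|\le |A|\,|B|$.

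For the lower bound, I would assume without loss of generality that $|A|\ge|B|$, so that $\max(|A|,|B|)=|A|$, and fix some element $b_0\in B$ (possible since $B\neq\emptyset$). The translate $A+b_0=\{a+b_0:a\in A\}$ is contained in $A+B$, and the translation map $a\mapsto a+b_0$ is injective on $\mathbb{N}_0$ by cancellation of addition; therefore $|A+b_0|=|A|$, which gives $|A+B|\ge|A+b_0|=|A|=\max(|A|,|B|)$. Combining the two bounds yields $\max(|A|,|B|)\le|A+B|\le|A|\,|B|$.

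Finally, for an IASI $f$ of $G$ and an edge $uv\in E(G)$, the sets $f(u)$ and $f(v)$ are non-empty finite subsets of $\mathbb{N}_0$, so applying the displayed inequalities to $A=f(u)$, $B=f(v)$ together with $g_f(uv)=f(u)+f(v)$ gives the stated chain. There is no genuine obstacle in this argument; the only point that needs a moment of care is the injectivity of translation used for the lower bound, since that is precisely what prevents the sumset of two $k$-element sets from collapsing to fewer than $k$ elements.
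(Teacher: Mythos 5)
Your proof is correct: the upper bound via the surjection $A\times B\to A+B$, $(a,b)\mapsto a+b$, and the lower bound via the injective translate $A+b_0\subseteq A+B$ are exactly the standard argument for this sumset inequality, and the specialisation to $A=f(u)$, $B=f(v)$ is immediate. Note that the paper itself states this lemma as a quoted result from \cite{GS1} without reproducing a proof, so there is nothing to diverge from; your argument is complete and fills that gap in the expected way.
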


\begin{definition}{\rm
\cite{GS1} An IASI $f$ is said to be a {\em weak IASI} if $|g_f(uv)|=max(|f(u)|,|f(v)|)$ for all $u,v\in V(G)$ and is said to be a {\em strong IASI} if $|g_f(uv)|=|f(u)|.|f(v)|$ for all $u,v\in V(G)$.}
\end{definition}

\begin{definition}{\rm
\cite{GS1} A weak  IASI (or a strong  IASI) is said to be {\em weakly uniform IASI} (or {\em strongly uniform IASI}) if $|g_f(uv)|=k$, for all $u,v\in V(G)$ and for some positive integer $k$.  A graph which admits a weak (or strong) IASI may be called a {\em weak (or strong) IASI graph}.}
\end{definition}

\begin{theorem}\label{T-WU}
\cite{GS1} For $k>1$, graph $G$ admits a weakly $k$-uniform IASI if and only if it is bipartite. No non-bipartite graph $G$ has a weakly $k$ uniform IASI, for $k>1$ 
\end{theorem}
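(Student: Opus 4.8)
The plan is to prove both directions of the biconditional; note that the final sentence of the statement is just the contrapositive of the ``only if'' part and so needs no separate treatment. The engine of the whole argument will be the elementary sumset inequality $|A+B|\ge |A|+|B|-1$, valid for any finite nonempty $A,B\subseteq\mathbb{N}_0$ (list $A=\{a_1<\cdots<a_m\}$ and $B=\{b_1<\cdots<b_n\}$ and observe that $a_1+b_1<a_1+b_2<\cdots<a_1+b_n<a_2+b_n<\cdots<a_m+b_n$ are $m+n-1$ distinct elements of $A+B$). If this is already recorded in \cite{GS1}, I would simply cite it there rather than reprove it.

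For the ``only if'' direction, suppose $G$ admits a weakly $k$-uniform IASI $f$ with $k>1$. Take an arbitrary edge $uv$ and assume, without loss of generality, that $|f(u)|\le |f(v)|$. The weak condition gives $k=|g_f(uv)|=\max(|f(u)|,|f(v)|)=|f(v)|$, while the sumset inequality gives $k=|f(u)+f(v)|\ge |f(u)|+|f(v)|-1=|f(u)|+k-1$, whence $|f(u)|\le 1$, i.e.\ $|f(u)|=1$. So every edge has exactly one endpoint of set-indexing number $1$ and the other of set-indexing number $k$ (they cannot both be $1$, since $k>1$). Because $G$ has no isolated vertices, every vertex lies on an edge and therefore has set-indexing number $1$ or $k$; as $k>1$, the sets $V_1=\{v:|f(v)|=1\}$ and $V_k=\{v:|f(v)|=k\}$ are disjoint, cover $V(G)$, and by the previous observation every edge joins $V_1$ to $V_k$. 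Thus $(V_1,V_k)$ is a bipartition and $G$ is bipartite.

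For the ``if'' direction, fix an integer $k>1$ and let $G$ be bipartite with parts $X=\{x_1,\dots,x_p\}$ and $Y=\{y_1,\dots,y_q\}$. I would define $f$ by $f(x_i)=\{i\}$ for $1\le i\le p$ and $f(y_j)=\{jN, jN+1,\dots, jN+k-1\}$ for $1\le j\le q$, where $N$ is any fixed integer with $N>p$. Then $f$ is injective: the $f(x_i)$ are distinct singletons, the $f(y_j)$ are distinct (their least elements $jN$ differ), and a singleton cannot coincide with a $k$-element set since $k\ge 2$. For each edge $x_iy_j$ we get $g_f(x_iy_j)=\{jN+i,\dots, jN+i+k-1\}$, a block of $k$ consecutive integers, so $|g_f(x_iy_j)|=k=\max(|f(x_i)|,|f(y_j)|)$; hence $f$ is a weak IASI which is $k$-uniform. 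Finally $g_f$ is injective on $E(G)$ because, with $1\le i\le p<N$, the least element $jN+i$ of $g_f(x_iy_j)$ recovers $j$ and $i$ uniquely via division by $N$. Therefore $f$ is a weakly $k$-uniform IASI of $G$.

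The only step that requires any care is the converse: one has to place the $k$-element labels on $Y$ so that the induced edge labels remain pairwise distinct, which is precisely why the blocks assigned to the $y_j$ are spread far apart (the condition $N>p$). The ``only if'' direction, by contrast, is pure bookkeeping on cardinalities once the sumset inequality is in hand.
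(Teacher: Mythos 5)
Your proposal is correct, but there is nothing in this paper to compare it against: Theorem~\ref{T-WU} is quoted from \cite{GS1} and stated here without proof. Evaluated on its own terms, your argument is sound and self-contained. The key lemma $|A+B|\ge |A|+|B|-1$ does the real work in the ``only if'' direction: it shows that the weak condition $|g_f(uv)|=\max(|f(u)|,|f(v)|)$ forces $\min(|f(u)|,|f(v)|)=1$ on every edge, which is exactly the property this paper invokes without justification in the proof of Theorem~\ref{T-SB1} (``by the definition of weak IASI, each edge of $G$ has an end vertex which is mono-indexed''); your derivation therefore supplies a proof of a fact the paper treats as definitional. The resulting bipartition $(V_1,V_k)$ is well defined because $k>1$ and every vertex lies on an edge, as you note. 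Your ``if'' direction is a more careful version of the construction sketched in Theorem~\ref{T-WBP}: the paper merely says to assign singletons to one part and non-singletons to the other, whereas you must additionally guarantee that all edge labels have cardinality exactly $k$ and that $g_f$ is injective, and your choice of arithmetic-progression blocks $\{jN,\dots,jN+k-1\}$ with $N>p$ handles both (the least element $jN+i$ recovers $(i,j)$ by division with remainder). The one direction of the equivalence you rely on from Lemma~\ref{L-Card}, namely $|A+B|\le |A|\,|B|$, is not actually needed anywhere in your argument, so the proof stands on the single sumset inequality you prove. No gaps.
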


\begin{theorem}\label{T-AU}
\cite{GS1} A weakly $k$-uniform IASI graph is also an arbitrarily $k$-uniform IASI graph.
\end{theorem}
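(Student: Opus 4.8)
The plan is to route everything through bipartiteness and the already-established Theorem~\ref{T-WU}. Throughout I take $k>1$; the case $k=1$ is degenerate, since every $1$-uniform IASI is automatically weak and, as noted below, every graph carries a $1$-uniform IASI, so ``weakly $k$-uniform'' is meant here with $k>1$, in keeping with Theorem~\ref{T-WU}.

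First I would apply Theorem~\ref{T-WU} to the hypothesis: because $G$ admits a weakly $k$-uniform IASI with $k>1$, $G$ is bipartite. Next, fixing an arbitrary positive integer $j$, I must exhibit a $j$-uniform IASI of $G$. For $j>1$ this is immediate from the reverse implication of Theorem~\ref{T-WU}: a bipartite graph admits a weakly $j$-uniform IASI, which in particular is a $j$-uniform IASI. For $j=1$ I would write down one directly: enumerating $V(G)=\{v_1,\dots,v_p\}$, put $f(v_i)=\{2^{i}\}$. This $f$ is injective, every induced edge label $f(v_r)+f(v_s)=\{2^{r}+2^{s}\}$ is a singleton (so the IASI is $1$-uniform), and $g_f$ is injective because the unordered pair $\{r,s\}$ can be read off from the two-bit binary expansion of $2^{r}+2^{s}$. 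Hence $G$ possesses a $j$-uniform IASI for every positive integer $j$, that is, $G$ is an arbitrarily $k$-uniform IASI graph.

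The real content is entirely Theorem~\ref{T-WU}; the one mild obstacle is that Theorem~\ref{T-WU} is stated only for exponents larger than $1$, so the single value $j=1$ has to be handled separately by the explicit power-of-two labelling above. I would also add, as a remark, the converse: an arbitrarily $k$-uniform graph has in particular a $2$-uniform IASI and is therefore bipartite by Theorem~\ref{T-2U}, so ``bipartite'', ``admitting a weakly $m$-uniform IASI for some $m>1$'' and ``arbitrarily $k$-uniform'' are equivalent properties.
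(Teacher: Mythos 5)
Your argument is correct, and it follows the natural route: the paper itself only quotes Theorem~\ref{T-AU} from \cite{GS1} without reproducing a proof, and deducing bipartiteness from Theorem~\ref{T-WU} and then reusing its converse for every exponent $j>1$ is exactly the intended mechanism. Your separate treatment of $j=1$ via the labelling $f(v_i)=\{2^i\}$ is a necessary supplement (Theorem~\ref{T-WU} says nothing about $k=1$) and is carried out correctly, as is your observation that the hypothesis must be read with $k>1$, since by Theorem~\ref{T-WUAU} a non-bipartite graph admits a weakly $1$-uniform IASI but no $2$-uniform one.
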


\begin{theorem}\label{T-WUAU}
\cite{GS1} A graph $G$ admits a $k$-uniform IASI if and only if $k$ is odd or $G$ is bipartite.
\end{theorem}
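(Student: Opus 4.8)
\medskip

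\noindent The plan is to treat the two implications separately. For sufficiency I would split on the parity of $k$. If $k$ is odd, write $k=2m+1$ with $m\in\mathbb{N}_0$ and build a $k$-uniform IASI on an \emph{arbitrary} graph $G$ with vertex set $\{v_1,\dots,v_p\}$: fix a Sidon set $\{c_1,\dots,c_p\}\subseteq\mathbb{N}_0$, that is, one for which the sums $c_i+c_j$ with $i\le j$ are pairwise distinct (e.g.\ $c_i=3^i$), and set $f(v_i)=\{c_i,c_i+1,\dots,c_i+m\}$. Each $f(v_i)$ is an arithmetic progression of length $m+1$, the $f(v_i)$ are distinct because their least elements differ, and for every edge $v_iv_j$ the label $g_f(v_iv_j)=f(v_i)+f(v_j)=\{c_i+c_j,\,c_i+c_j+1,\,\dots,\,c_i+c_j+2m\}$ is a block of $2m+1=k$ consecutive integers, which is determined by its least element $c_i+c_j$; the Sidon condition then makes $g_f$ injective, so $f$ is a $k$-uniform IASI of $G$ (for $k=1$ this is just the labelling of the vertices by distinct singletons). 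If instead $k$ is even, then $k>1$ and, under the hypothesis ``$k$ odd or $G$ bipartite'', $G$ is bipartite; Theorem~\ref{T-WU} then yields a weakly $k$-uniform IASI of $G$, which is in particular a $k$-uniform IASI.

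For the converse I would argue by contradiction, assuming $G$ non-bipartite, $k$ even, and $f$ a $k$-uniform IASI of $G$. Since $G$ contains an odd cycle $C$, and the restriction of $f$ to $V(C)$ is still an injective labelling whose induced edge map is injective and sends every edge of $C$ to a set of cardinality $k$, it is enough to reach a contradiction on the cycle itself; write $C=v_1v_2\cdots v_nv_1$ with $n$ odd and $r_i=|f(v_i)|$. Lemma~\ref{L-Card} gives $\max(r_i,r_{i+1})\le k\le r_ir_{i+1}$ along every edge, and the elementary estimate $|A+B|\ge|A|+|B|-1$ for finite sets of integers---with equality precisely when $A,B$ are arithmetic progressions with a common difference---adds $r_i+r_{i+1}\le k+1$. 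First I would rule out singletons: if $r_i=1$ then both incident edges have label-cardinality $|f(v_{i\pm1})|$, forcing $r_{i-1}=r_{i+1}=k$; since $|A+B|=\max(|A|,|B|)$ forces $\min(|A|,|B|)=1$ (easily, from the shape of $A+B$), the next edges out force $r_{i-2}=r_{i+2}=1$, and iterating around $C$---using that $j\mapsto j+2$ is transitive on $\mathbb{Z}/n$ since $n$ is odd---would make every $r_j$ equal to both $1$ and $k$, which is impossible. Hence $r_i\ge2$ for all $i$.

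The heart of the argument is then a parity count together with the equality case of the sumset bound. Put $\varepsilon_i=(k+1)-(r_i+r_{i+1})\ge0$; summing over the $n$ edges gives $2\sum_i r_i=n(k+1)-\sum_i\varepsilon_i$, so $\sum_i\varepsilon_i=n(k+1)-2\sum_i r_i$ is odd (as $n$ and $k+1$ are odd), and in particular some edge is ``loose'', $\varepsilon_i>0$. On the other hand, if every edge had $\varepsilon_i=0$ then every $f(v_i)$ would be an arithmetic progression of length $\ge 2$, hence with a unique common difference, so all the $f(v_i)$ would share one common difference and $r_i+r_{i+1}$ would be the constant $k+1$; as $n$ is odd this forces all $r_i$ equal to a value $r$ with $2r=k+1$, impossible for even $k$. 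The step I expect to be the main obstacle is to handle the mixed case, in which an odd number of loose edges cut $C$ into runs of arithmetic-progression vertices carrying distinct common differences: here I would try to play the equality characterisation of $|A+B|=|A|+|B|-1$ against the multiplicative bound $k\le r_ir_{i+1}$ and the injectivity of $g_f$ to show that no cyclic sequence $(r_1,\dots,r_n)$ of odd length can be realised by sets all of whose consecutive sums have cardinality $k$. Once that case analysis is carried through, the contradiction follows, and together with the sufficiency direction this gives the characterisation.
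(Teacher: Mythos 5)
The paper states Theorem~\ref{T-WUAU} only as a result quoted from \cite{GS1} and gives no proof of it here, so there is no in-paper argument to compare against; judged on its own terms, your sufficiency half is complete and correct. The Sidon-set-plus-interval construction $f(v_i)=\{c_i,c_i+1,\dots,c_i+m\}$ does produce a $(2m+1)$-uniform IASI of an arbitrary graph, and the even-$k$ bipartite case follows from Theorem~\ref{T-WU} exactly as you say. In the necessity half, the elimination of singleton labels around an odd cycle and the parity count forcing some edge to have $r_i+r_{i+1}<k+1$ are also sound.

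The problem is the ``mixed case'' you defer: it is not a technical obstacle to be overcome but the place where the claimed necessity actually fails under the definitions used in this paper. Take $K_3$ with $f(u)=\{0,1\}$, $f(v)=\{0,2\}$, $f(w)=\{0,3\}$. The edge labels are $\{0,1,2,3\}$, $\{0,2,3,5\}$ and $\{0,1,3,4\}$, pairwise distinct and each of cardinality $4$, so this is a $4$-uniform IASI of a non-bipartite graph with $k$ even; more generally $f(v_i)=\{0,d_i\}$ with $\{d_i\}$ a Sidon set gives a $4$-uniform IASI of \emph{any} graph. This example lives precisely in your mixed case: all $r_i=2$, every edge is loose ($\varepsilon_i=1$), and the three sets are arithmetic progressions with pairwise distinct common differences, so no combination of the equality case of $|A+B|\ge|A|+|B|-1$ with $k\le r_ir_{i+1}$ and injectivity of $g_f$ can yield the contradiction you are hoping for. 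The statement that your style of argument does prove is Theorem~\ref{T-WU} for \emph{weakly} $k$-uniform IASIs, where the condition $|g_f(uv)|=\max(|f(u)|,|f(v)|)$ forces $\min(|f(u)|,|f(v)|)=1$ on every edge and hence a singleton/non-singleton $2$-colouring; plain $k$-uniformity coincides with that situation only when $k=2$ (Theorem~\ref{T-2U}).
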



\section{New Results on Weak IASI Graphs}

In this paper, we establish more results on weak IASI graphs and graph operations. We proceed by defining the following notions.

\begin{definition}{\rm
An element (a vertex or an edge) of graph $G$ which has the set-indexing number 1 is called a {\em mono-indexed element} of that graph.}
\end{definition}

\begin{definition}{\rm
The {\em sparing number} of a graph $G$ is defined to be the minimum number of mono-indexed edges required for $G$ to admit a weak IASI and is denoted by $\varphi(G)$.}
\end{definition}

\begin{theorem}\label{T-WSG}
If a graph $G$ is a weak IASI graph, then any subgraph $H$ of $G$ is also a weak IASI graph.  
\end{theorem}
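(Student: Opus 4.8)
The plan is to show that admitting a weak IASI is a hereditary property, obtained simply by restricting a given weak IASI of $G$ to the subgraph $H$. Let $f\colon V(G)\to 2^{\mathbb{N}_0}$ be a weak IASI of $G$ and let $H$ be any subgraph of $G$, so that $V(H)\subseteq V(G)$ and $E(H)\subseteq E(G)$. Set $f'=f|_{V(H)}$, the restriction of $f$ to $V(H)$, and let $g_{f'}$ denote the induced edge map of $H$ associated to $f'$. The claim will follow once we check that $f'$ is an IASI of $H$ and that it satisfies the weak condition.

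First I would verify that $f'$ is an IASI of $H$. Injectivity of $f'$ is immediate, since the restriction of an injective function is injective. For any edge $uv\in E(H)\subseteq E(G)$ we have $g_{f'}(uv)=f'(u)+f'(v)=f(u)+f(v)=g_f(uv)$, so $g_{f'}$ is precisely the restriction of $g_f$ to $E(H)$ and is therefore injective as well. Hence $f'$ is an IASI of $H$.

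Next I would check the weak condition. Since $f$ is a weak IASI of $G$, Lemma~\ref{L-Card} is sharp on the lower side for every edge of $G$, that is, $|g_f(uv)|=\max(|f(u)|,|f(v)|)$ for all $uv\in E(G)$. Restricting attention to edges of $H$ and using $g_{f'}(uv)=g_f(uv)$ together with $f'=f$ on $V(H)$, we obtain $|g_{f'}(uv)|=\max(|f'(u)|,|f'(v)|)$ for every $uv\in E(H)$. Thus $f'$ is a weak IASI of $H$, and $H$ is a weak IASI graph.

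I do not expect a genuine obstacle; the whole content is the bookkeeping that a restriction of an injective map stays injective and that the max-equality is inherited edge by edge. The only points worth a sentence of care are the standing convention that graphs have no isolated vertices (deleting edges from $G$ may produce isolated vertices in $H$, but this does not affect the existence of the required set-labeling) and, optionally, the remark that it suffices to handle the two generating operations of deleting a single vertex and deleting a single edge, since every subgraph arises from a finite sequence of such deletions.
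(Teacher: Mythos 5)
Your proposal is correct and follows essentially the same route as the paper: restrict the weak IASI $f$ to $V(H)$, observe that the induced edge function is the restriction of $g_f$ to $E(H)$, and note that injectivity and the max-equality are inherited edge by edge. Your version simply spells out the verification steps that the paper's proof leaves implicit.
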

\begin{proof}{\rm
We have already proved the corresponding result for weakly $k$-uniform IASI graphs in \cite{GS1}. Let $G$ be a graph which admits a weak IASI and $H$ be a subgraph of $G$. Let $f^\ast$ be the restriction of $f$ to $V(H)$. Then $g_{f^\ast}$ is the corresponding restriction of $g_f$ to $E(H)$. Then clearly, $f^\ast$ is an induced set-indexer on $H$, which is a weak IASI on $H$. Hence, $H$ is a weak IASI graph.}
\end{proof}

\begin{remark}\label{R-WSG}{\rm
As the contrapositive statement of Theorem \ref{T-WSG}, we notice that if $G$ is a graph which has no weak IASI, then any supergraph of $G$ does not have a weak IASI.}
\end{remark}

Now, we proceed to verify the existence of weak IASI for different classes of graphs, as all graphs, in general, do not admit a weak IASI.

\begin{theorem}\label{T-WBP}
All bipartite graphs admit a weak IASI.
\end{theorem}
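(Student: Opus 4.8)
The plan is to exhibit a concrete weak IASI on an arbitrary bipartite graph, though I would first note that a one-line argument is already available: since a weakly $k$-uniform IASI is, by definition, a weak IASI, Theorem \ref{T-WU} applied with $k=2$ says every bipartite graph admits a weakly $2$-uniform IASI and hence a weak IASI. Because the point of this section is to describe the \emph{shape} of weak IASIs on various graph classes, I would nonetheless give a self-contained construction that makes the role of the bipartition explicit.

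Concretely, fix a bipartition $(V_1,V_2)$ of $V(G)$, write $V_1=\{u_1,\dots ,u_m\}$ and $V_2=\{w_1,\dots ,w_n\}$, choose an integer $M>m$, and define $f(u_i)=\{0,i\}$ for $1\le i\le m$ and $f(w_j)=\{Mj\}$ for $1\le j\le n$. (One could instead give every vertex a distinct singleton, say $f(v_i)=\{2^i\}$, which is already a $1$-uniform and hence weak IASI; but that ignores the bipartition and produces only mono-indexed elements, so the asymmetric two-set labelling above is the more informative choice.) Every edge of $G$ joins some $u_i\in V_1$ to some $w_j\in V_2$, and there $f(u_i)+f(w_j)=\{Mj,\,Mj+i\}$, a set of cardinality $2=\max\bigl(|f(u_i)|,|f(w_j)|\bigr)$; so the defining cardinality condition of a weak IASI holds on every edge.

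What remains is to verify the two injectivity requirements. Injectivity of $f$ is immediate: the labels on $V_1$ are pairwise distinct two-element sets, those on $V_2$ are pairwise distinct singletons, and a two-element set is never a singleton. For $g_f$, whose values are the sets $\{Mj,\,Mj+i\}$, the inequality $M>m\ge i$ gives $Mj\le Mj+i<M(j+1)$, so the pair $\{Mj,Mj+i\}$ lies in the block $[\,Mj,\,M(j+1)\,)$; distinct $j$ give disjoint blocks, and inside a block the smaller element recovers $j$ and the difference recovers $i$. Hence $g_f(u_iw_j)$ determines $(i,j)$, and $g_f$ is injective, so $f$ is a weak IASI on $G$.

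I expect the only point needing any care — and it is purely bookkeeping — to be this spacing argument forcing $g_f$ to be injective while every edge-label stays of size exactly two; the parameter $M$ is chosen precisely to make that work. It is worth recording that the construction genuinely uses bipartiteness: had some edge joined two vertices both labelled $\{0,i\}$ and $\{0,i'\}$, its edge-label $\{0,i,i',i+i'\}$ would in general have size $4\neq\max(2,2)$, violating the weak condition — consistent with the fact that non-bipartite graphs need not admit such an efficient weak IASI.
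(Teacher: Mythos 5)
Your construction is essentially the paper's proof: the paper assigns distinct singleton sets to one part of the bipartition and distinct non-singleton sets to the other, exactly the asymmetric labelling you use (with the roles of $V_1$ and $V_2$ swapped), and your argument is correct. The one thing you add beyond the paper is the explicit choice $M>m$ and the block argument verifying that $g_f$ is injective, a point the paper's proof leaves as ``clearly''; your side remark that Theorem \ref{T-WU} with $k=2$ already yields the result is also valid.
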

\begin{proof}
Let $G$ be a bipartite graph with bipartition $(V_1,V_2)$ of the vertex set $V(G)$. Assign distinct singleton sets to the vertices in $V_1$ and distinct non-singleton sets to the vertices in $V_2$. This IASI is clearly a weak IASI for $G$.
\end{proof} 

\begin{remark}\label{R-BP}{\rm
Due to Theorem \ref{T-WBP}, all paths, trees and even cycles admit a weak IASI. We also, observe that the sparing number of bipartite graphs is $0$.}
\end{remark}

\begin{theorem}\label{T-SB1}
If a connected graph $G$ admits a weak IASI, then $G$ is bipartite or $G$ has at least one mono-indexed edge. 
\end{theorem}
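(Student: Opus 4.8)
The plan is to argue by contrapositive: suppose $G$ is a connected graph that admits a weak IASI $f$ and has \emph{no} mono-indexed edge; I will show $G$ must be bipartite. Since every edge $uv$ has set-indexing number at least $2$ and $f$ is weak, Lemma~\ref{L-Card} forces $|g_f(uv)| = \max(|f(u)|,|f(v)|) \ge 2$ for every edge, so in particular no edge can join two mono-indexed vertices. The natural approach is to look at the set $W$ of vertices carrying singleton labels and the set $W'$ of vertices carrying labels of cardinality $\ge 2$; the observation just made says $W$ is an independent set. The idea is to refine this into a genuine $2$-colouring.

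First I would establish the key local restriction: if $u$ and $v$ are adjacent and both have non-singleton labels, analyse $|f(u)+f(v)| = \max(|f(u)|,|f(v)|)$. Writing $|f(u)| = m$, $|f(v)| = n$ with $m,n\ge 2$, the weak condition says the sumset has size exactly $\max(m,n)$, which is a very tight constraint — essentially $f(u)$ and $f(v)$ must be arithmetic-progression-like with a common difference, and in particular this should propagate a parity/structural obstruction along any odd closed walk. The cleanest route is probably to show that the subgraph $\langle W'\rangle$ induced on the non-singleton vertices is itself bipartite, using the fact (via Theorem~\ref{T-2U} or direct sumset reasoning on $|A+B| = \max(|A|,|B|)$) that non-singleton weak-labelled edges behave like a $2$-uniform-type situation locally; then combine the bipartition of $\langle W'\rangle$ with the independence of $W$ to build a bipartition of all of $G$. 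Concretely, one shows any odd cycle in $G$ would have to use an edge inside $W$ (impossible, $W$ independent) or produce an odd cycle inside $\langle W'\rangle$ (impossible by the bipartiteness of that part) or alternate between $W$ and $W'$ in a way that forces even length — a short case analysis on how an odd cycle meets $W$.

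The main obstacle I expect is the step asserting that adjacent non-singleton labels force bipartite-type structure: proving that $|A+B| = \max(|A|,|B|)$ with $|A|,|B|\ge 2$ imposes enough rigidity (arithmetic progressions with equal common difference) to rule out odd cycles. This is really the combinatorial heart, and it may be cleaner to invoke Theorem~\ref{T-WU}/\ref{T-2U} rather than redo the sumset analysis: if $G$ had no mono-indexed edge and were non-bipartite, one could try to derive a weakly $2$-uniform-like IASI on some non-bipartite subgraph and contradict Theorem~\ref{T-WU}. If that reduction is awkward, the fallback is the direct argument: take a shortest odd cycle $C$ in $G$, note all its edges are non-mono-indexed, and derive a contradiction by tracking the minimum element of the labels around $C$ (each edge of $C$ being non-singleton forces the label cardinalities along $C$ to be "nested" in a way incompatible with returning to the start after an odd number of steps). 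Once the odd-cycle obstruction is in hand, concluding bipartiteness — and hence the stated dichotomy — is immediate.
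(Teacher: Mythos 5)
There is a genuine gap, and it sits exactly where you placed your ``main obstacle.'' You treat the case of an edge $uv$ with $|f(u)|,|f(v)|\ge 2$ as a constrained-but-possible situation whose arithmetic-progression rigidity you would then have to propagate around odd cycles. In fact that case is vacuous: for finite non-empty sets of integers one always has $|A+B|\ge |A|+|B|-1$, so if $|A|,|B|\ge 2$ then $|A+B|\ge \max(|A|,|B|)+\min(|A|,|B|)-1>\max(|A|,|B|)$, and the weak condition $|g_f(uv)|=\max(|f(u)|,|f(v)|)$ can never hold on such an edge. (The AP-with-common-difference rigidity you invoke characterises equality in $|A+B|=|A|+|B|-1$, which is still strictly larger than the maximum when both sets have at least two elements; you have conflated the two equality cases.) The upshot is that your set $W'$ of non-singleton vertices is not merely ``bipartite-like'' --- it is independent, so every edge of $G$ has at least one mono-indexed endpoint. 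Without this observation your sketch does not close: the proposed reduction to Theorem~\ref{T-WU} or Theorem~\ref{T-2U} and the ``track the minimum element around a shortest odd cycle'' fallback are left entirely unexecuted, and neither would go through in the form stated, because the configuration they are meant to analyse cannot occur.

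Once you have the key fact, the theorem is two lines, and this is essentially what the paper does: let $V_1$ be the set of mono-indexed vertices and $V_2$ the rest. Then $V_2$ is independent, so either $V_1$ is also independent, in which case $(V_1,V_2)$ is a bipartition of $G$, or some edge joins two vertices of $V_1$, and that edge has set-indexing number $1$. No odd-cycle analysis, no sumset structure theory, and no appeal to the uniform-IASI theorems is needed.
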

\begin{proof}
Assume that the graph $G$ is a weak IASI graph. Then, by the definition of weak IASI, each edge of $G$ has an end vertex which is mono-indexed. Let $V_1$ be the set of all mono-indexed vertices of $G$ and $V_2=V(G)-V_1$. If $(V_1,V_2)$ forms a bipartition of $V(G)$, then the proof is complete. Hence, assume that $(V_1,V_2)$ is not a bipartition of $V(G)$. Since every vertex of $V_2$ has the set-indexing number greater than 1, every edge connecting two vertices in $V_2$, if exists, has both the end vertices with set-indexing number greater than 1, which is a contradiction to the hypothesis that $G$ has a weak IASI. Hence, no two vertices of $V_2$ can be adjacent. Therefore, at least two vertices in $V_1$ must be adjacent. If $e$ is such an edge in $G$, with both end vertices in $V_1$, then the set-indexing number of $e$ is 1. That is, $G$ has at least one mono-indexed edge.
\end{proof}

\begin{theorem}
Let $G$ be a bipartite graph which admits a weak IASI and let $u,v$ be two non-adjacent vertices in $G$. Then, $G+uv$ is a weak IASI graph if and only if $G+uv$ is bipartite or $uv$ is a mono-indexed edge.
\end{theorem}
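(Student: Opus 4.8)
The plan is to prove the two implications separately, disposing of the ``if'' direction by two short sub-cases and doing the real work in the ``only if'' direction, where the hypothesis that $G$ is bipartite is what makes everything go through.

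For the ``if'' direction, first suppose $G+uv$ is bipartite. Then it admits a weak IASI directly by Theorem~\ref{T-WBP}, so it is a weak IASI graph. Next suppose $uv$ is a mono-indexed edge; I read this as the existence of a weak IASI $f$ of $G$ under which $f(u)$ and $f(v)$ are both singletons. Keeping these labels in $G+uv$, the new edge gets the label $f(u)+f(v)$, which is again a singleton, so $|g_f(uv)|=1=\max(|f(u)|,|f(v)|)$ and the weak-IASI condition is met at $uv$; the only thing left to secure is that $g_f$ remains injective on $E(G+uv)$, and this can be arranged by drawing the singleton labels on the vertices from a sufficiently spread-out (Sidon-type) set of non-negative integers, exactly as in the construction behind Theorem~\ref{T-WBP}. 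Hence $G+uv$ is a weak IASI graph.

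For the ``only if'' direction, assume $G+uv$ is a weak IASI graph; if it is bipartite, the first alternative holds and we are done, so assume $G+uv$ is not bipartite. The key structural point is that $u$ and $v$ cannot lie in opposite classes of any bipartition of $G$: since $G$ is bipartite, all $u$--$v$ paths in $G$ have the same parity, and since adjoining $uv$ creates an odd cycle (as $G+uv$ is non-bipartite), that common parity must be even; in particular $u$ and $v$ lie in a single component of $G$ and $G$ admits a bipartition $(V_1,V_2)$ with $u,v\in V_1$. Now relabel $G+uv$ by assigning distinct singleton sets to all vertices of $V_1$ and distinct non-singleton sets to all vertices of $V_2$, once more choosing the integers spread out enough that the induced edge labels are pairwise distinct. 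Every edge of $G$ joins $V_1$ to $V_2$ and hence has a mono-indexed end, the new edge $uv$ joins two vertices of $V_1$, and no edge of $G+uv$ lies inside $V_2$; therefore (using Lemma~\ref{L-Card} at the $V_1$--$V_2$ edges) this labeling is a weak IASI of $G+uv$ under which $uv$ has set-indexing number $1$, i.e. $uv$ is a mono-indexed edge. This gives the second alternative and finishes the proof.

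The point needing the most care is one of interpretation together with a little bookkeeping rather than a deep difficulty: one has to agree that ``$uv$ is a mono-indexed edge'' asserts the \emph{existence} of a weak IASI of $G+uv$ making $uv$ mono-indexed, not that every weak IASI of $G+uv$ does so, and then one must check in both directions that forcing $uv$ to be mono-indexed does not spoil the injectivity of the induced edge labeling — both handled by the same ``large, Sidon-spaced vertex labels'' device already implicit in Theorem~\ref{T-WBP}. It is also worth noting that in the ``only if'' direction the assumption that $G+uv$ be a weak IASI graph is not actually used once $G$ is known to be bipartite, since the required labeling is constructed from scratch; thus the real content of the theorem is the dichotomy ``either adjoining $uv$ keeps the graph bipartite, or its endpoints fall in one class of a bipartition of $G$ and can be made mono-indexed.''
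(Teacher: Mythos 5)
Your proof is correct, and in the ``only if'' direction it takes a genuinely different --- and in fact more careful --- route than the paper. The paper's forward direction simply invokes Theorem~\ref{T-SB1}, which only guarantees that \emph{some} edge of $G+uv$ is mono-indexed, and then asserts without further argument that this edge is $uv$; read literally (i.e.\ that $uv$ is mono-indexed under the given weak IASI) that conclusion is false --- adding a chord to a path $u\!-\!w\!-\!v$ gives a triangle, which has a weak IASI in which $u$ is the unique non-mono-indexed vertex and hence $uv$ is not mono-indexed. You correctly diagnose that ``$uv$ is a mono-indexed edge'' must be read existentially, and you then supply the argument the paper omits: since $G$ is bipartite and $G+uv$ is not, the vertices $u$ and $v$ lie in the same colour class of a bipartition of $G$, so reassigning singletons to that class and non-singletons to the other yields a weak IASI of $G+uv$ in which $uv$ is mono-indexed. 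Your parenthetical care about injectivity of the induced edge labels (the Sidon-type choice of integers) is also a point the paper glosses over in Theorem~\ref{T-WBP} and its descendants. The ``if'' direction matches the paper's. Your closing observation is apt: under the existential reading the weak-IASI hypothesis on $G+uv$ does no work in the forward direction, and the theorem reduces to the dichotomy that either $G+uv$ stays bipartite or $u,v$ share a colour class and $uv$ can be made mono-indexed; this is worth recording, since it shows the paper's statement is weaker (and its proof less informative) than what is actually true.
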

\begin{proof}\label{T-WKBP}
Let $G$ be a bipartite graph which admits a weak IASI and let $u,v$ be two non-adjacent vertices in $G$. Assume that $G_1=G+uv$ be a weak IASI graph. Then by Theorem \ref{T-SB1}, $G_1$ is bipartite or the edge $uv$ lies in the same partition of mono-indexed vertices in $G$. That is, if $G_1$ is non-bipartite, then $uv$ is mono-indexed.

Conversely, if $G_1=G+uv$ is bipartite, then by Theorem \ref{T-WBP}, it has a weak IASI. If not, then by hypothesis, $uv$ is mono-indexed. Since $G$ is a weak IASI graph and $uv$ is mono-indexed, $G+uv$ is a weak IASI graph. Hence, if $G+uv$ is bipartite or $uv$ is mono-indexed, then $G+uv$ is a weak IASI graph.
\end{proof}

\begin{corollary}\label{C-WKBP}
Let a complete bipartite graph $K_{m,n}$ be a weak IASI graph. Then, a graph $G=K_{m,n}+e$ admits a weak IASI if and only if $e$ is a mono-indexed edge in $G$.
\end{corollary}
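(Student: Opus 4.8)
The plan is to derive the statement directly from Theorem \ref{T-WKBP} by showing that $G=K_{m,n}+e$ is necessarily non-bipartite, so that the ``bipartite'' alternative in that theorem becomes vacuous and the stated equivalence drops out.

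First I would fix notation: let $(V_1,V_2)$ be the bipartition of $K_{m,n}$. Since $K_{m,n}$ is complete bipartite, every vertex of $V_1$ is already adjacent to every vertex of $V_2$, so the only non-adjacent pairs of vertices are those lying entirely within $V_1$ or entirely within $V_2$. Hence the new edge $e=uv$ has both its endpoints in one part, say $u,v\in V_1$ (the case $u,v\in V_2$ being symmetric). In particular $|V_1|\ge 2$, and because $K_{m,n}$ has no isolated vertices we have $|V_2|\ge 1$; choosing any $w\in V_2$, the vertices $u,v,w$ induce a triangle in $G$. Thus $G$ contains an odd cycle and is not bipartite.

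Next I would invoke Theorem \ref{T-WKBP}. Its hypotheses are met: $K_{m,n}$ is a weak IASI graph (this is exactly the standing assumption of the corollary, and in any case follows from Theorem \ref{T-WBP}), and $u,v$ are non-adjacent in $K_{m,n}$. The theorem then states that $G=K_{m,n}+uv$ admits a weak IASI if and only if $G$ is bipartite or $uv$ is mono-indexed. Since we have just shown $G$ is non-bipartite, the first disjunct fails, and the equivalence reduces to: $G$ admits a weak IASI if and only if $e=uv$ is a mono-indexed edge, which is precisely the assertion.

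I do not expect any genuine obstacle; the corollary is essentially a specialisation of Theorem \ref{T-WKBP} to the complete bipartite case. The only step needing a word of care is the non-bipartiteness of $G$, i.e.\ the observation that an edge added within a part of a complete bipartite graph always closes a triangle (which uses that the other part is non-empty, guaranteed by the no-isolated-vertices convention); once that is in place, the rest is an immediate application of the previous theorem.
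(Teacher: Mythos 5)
Your proposal is correct and follows exactly the route of the paper's own proof: observe that $G=K_{m,n}+e$ cannot be bipartite and then apply Theorem \ref{T-WKBP} so that only the mono-indexed alternative survives. You merely supply the detail (the triangle through a vertex of the opposite part) that the paper leaves implicit when asserting non-bipartiteness.
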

\begin{proof}
Since $K_{m,n}$ is complete bipartite, $G$ can not be a bipartite graph. Then, by Corollary \ref{T-WKBP}, $e$ must be a mono-indexed edge of $G$. 
\end{proof}
Figure \ref{W-NB} depicts a weak IASI for a non-bipartite graph.

\begin{figure}[h!]
   \centering
    \includegraphics[width=0.5\textwidth]{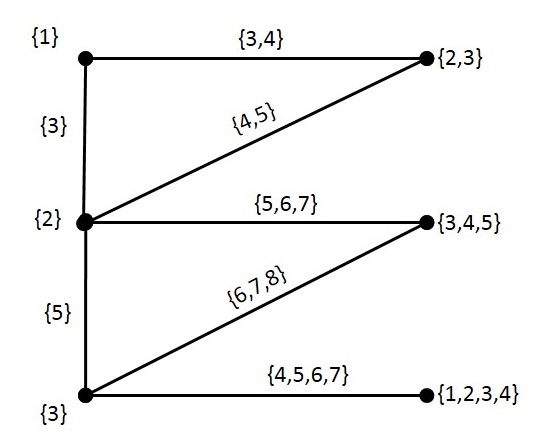}
     \caption{A non-bipartite graph which has a weak IASI.\label{W-NB}}
\end{figure}

\begin{theorem}\label{T-WKN}
The complete graph $K_n$ admits a weak IASI if and only if the maximum number of mono-indexed edges of $K_n$ is $\frac{1}{2}(n-1)(n-2)$.
\end{theorem}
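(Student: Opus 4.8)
The plan is to get the bound from a vertex-cover argument and the matching configuration from a Sidon-type labelling. Throughout, fix a weak IASI $f$ of $K_n$, let $V_1$ be its set of mono-indexed vertices and $V_2 = V(K_n)\setminus V_1$. First I would record the elementary fact that every edge of a weak IASI graph has a mono-indexed end vertex: if $|f(u)|,|f(v)|\ge 2$ then, for finite subsets of $\mathbb{N}_0$, one has $|f(u)+f(v)|\ge |f(u)|+|f(v)|-1>\max(|f(u)|,|f(v)|)$, contradicting weakness. This is exactly the observation used in the proof of Theorem \ref{T-SB1}, and it says precisely that $V_2$ is an independent set of $K_n$. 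Since $K_n$ is complete, an independent set has at most one vertex, so $|V_2|\le 1$ and $|V_1|\ge n-1$.

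Next, since the sum of two singletons $\{a\}+\{b\}=\{a+b\}$ is again a singleton, every edge of $K_n$ with both ends in $V_1$ is mono-indexed. Hence the number of mono-indexed edges of $f$ is at least $\binom{|V_1|}{2}\ge\binom{n-1}{2}=\tfrac12(n-1)(n-2)$; equivalently, at most $n-1$ edges of $K_n$ can escape being mono-indexed, and this can happen only when $|V_1|=n-1$ exactly. This yields the necessity part: $K_n$ cannot admit a weak IASI with fewer than $\tfrac12(n-1)(n-2)$ mono-indexed edges, so the extremal (best possible) weak IASI has exactly that many.

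For sufficiency I would exhibit a weak IASI attaining the bound. Pick integers $a_1<a_2<\dots<a_{n-1}$ in $\mathbb{N}_0$ whose pairwise sums $a_i+a_j$ $(i<j)$ are all distinct — a classical ``Sidon set'', available in every size — and set $f(v_i)=\{a_i\}$ for $1\le i\le n-1$ and $f(v_n)=\{0,1\}$ (any set of cardinality $\ge 2$ works). Then $f$ is injective on $V(K_n)$; the $\binom{n-1}{2}$ edges inside $\{v_1,\dots,v_{n-1}\}$ carry the pairwise distinct singleton labels $\{a_i+a_j\}$; and each edge $v_iv_n$ carries the label $a_i+\{0,1\}=\{a_i,a_i+1\}$, which has cardinality $2$, hence differs automatically from all the singleton labels, and differs from the other such labels because a translate of a finite set determines the translation. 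Thus $g_f$ is injective and $|g_f(uv)|=\max(|f(u)|,|f(v)|)$ on every edge, so $f$ is a weak IASI of $K_n$ with precisely $\tfrac12(n-1)(n-2)$ mono-indexed edges, which closes the equivalence.

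I expect the only genuine obstacle to be the construction: one must keep the $\binom{n-1}{2}$ forcibly mono-indexed edges of the clique on $V_1$ pairwise distinct, and this is exactly what imposes the Sidon condition on $\{a_1,\dots,a_{n-1}\}$, so the proof rests on the standard existence of Sidon sets of arbitrary size. The cross comparisons with the $n-1$ edges at $v_n$ are free because those labels have cardinality $2$, and the lower bound itself is immediate from the argument already established in Theorem \ref{T-SB1}.
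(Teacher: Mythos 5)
Your proof is correct, and it improves on the paper's own argument in one important respect. For the necessity direction you and the paper reach the same conclusion by essentially the same mechanism: the set $V_2$ of non-mono-indexed vertices must be independent (the paper walks through this vertex by vertex starting from $v_1,v_2$; you state it as an independence/vertex-cover fact and, usefully, you actually justify the underlying sumset inequality $|A+B|\ge |A|+|B|-1>\max(|A|,|B|)$ for $|A|,|B|\ge 2$, which the paper's Theorem \ref{T-SB1} invokes without proof). Since an independent set in $K_n$ has at most one vertex, at most $n-1$ edges escape being mono-indexed, giving the bound $\tfrac12(n-1)(n-2)$. Where you genuinely diverge is the sufficiency direction: the paper merely asserts that a labelling with one non-singleton vertex and $n-1$ singleton vertices "is a weak IASI for $K_n$," without exhibiting the sets or checking that the induced edge function $g_f$ is injective. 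You correctly identify that the $\binom{n-1}{2}$ forced mono-indexed edges carry singleton labels $\{a_i+a_j\}$ that must be pairwise distinct, which is exactly a Sidon condition on $\{a_1,\dots,a_{n-1}\}$, and you verify the remaining cross-comparisons with the cardinality-2 labels at $v_n$. This makes your construction complete where the paper's is only sketched, at the modest cost of invoking the existence of Sidon sets of every finite size (which is elementary, e.g.\ via powers of $2$ or a greedy choice).
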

\begin{proof}
If $K_n$ is 1-uniform, then the result is trivial. Hence, we consider the IASI graphs that are not 1-uniform. 

Assume that $K_n$ admits a weak IASI. Then, every edge of $K_n$ must have at least one end vertex of set-indexing number 1. Without loss of generality, label the vertex $v_1$ by a singleton set. Since $v_2$ is adjacent to $v_1$, label $v_2$ by a non-singleton set. Since all other vertices $v_3, v_4, \cdots, v_n$ are adjacent to $v_2$ in $K_n$ and $K_n$ admits a weak IASI, none of these vertices can be labeled by a non-singleton set. Therefore, only the edges incident on $v_2$ can have a set-indexing number greater than 1. That is, only $(n-1)$ edges in $K_n$ have a set indexing number greater than 1. Hence, the number of mono-indexed edges of $K_n$ is $\frac{1}{2}n(n-1)-(n-1)=\frac{1}{2}(n-1)(n-2)$. 

Conversely, assume that the number mono-indexed edges of $G$ is $\frac{1}{2}(n-1)(n-2)$. Therefore, the number edges of $G$ having set-indexing numbers greater than 1 is $\frac{1}{2}n(n-1)-\frac{1}{2}(n-1)(n-2)= (n-1)$. Since $K_n$ is an $(n-1)$-regular graph, all these edges must incident on the same vertex, say $v_i$. That is, the vertex $v_i$ is the only vertex that has a set-indexing number greater than 1. This set-indexer is a weak IASI for $K_n$.
\end{proof}

\begin{remark}{\rm
From Theorem \ref{T-WKN}, we observe that the sparing number of a complete graph $K_n$ is $\frac{1}{2}(n-1)^2$.}
\end{remark}

The Figure \ref{W-CG}, illustrates the admissibility of a weak IASI for complete graphs.

\begin{figure}[h!]
   \centering
    \includegraphics[width=0.5\textwidth]{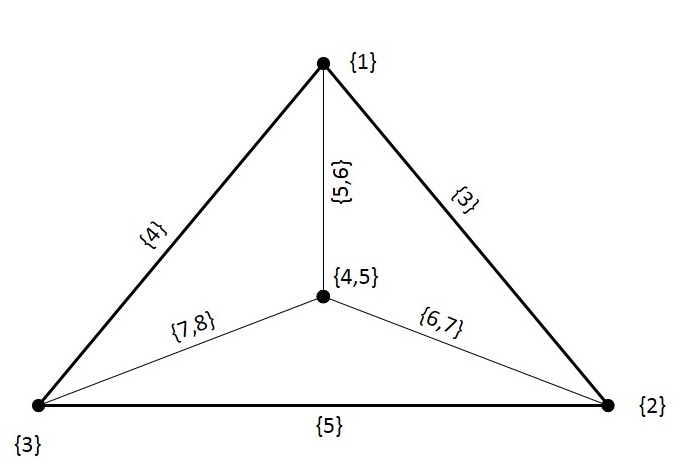}
     \caption{Existence of a weak IASI for complete graphs\label{W-CG}}
\end{figure}

Since bipartite graphs support weak IASI, we now concentrate on non-bipartite graphs. First, we establish a necessary and sufficient condition for an odd cycle to have a weak IASI.

\begin{theorem}\label{T-WUOC}
An odd cycle $C_n$ has a weak IASI if and only if it has at least one mono-indexed edge. 
\end{theorem}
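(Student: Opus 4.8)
The plan is to prove the two implications separately, using Theorem~\ref{T-SB1} for the forward direction and an explicit labeling for the reverse one.

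For the forward direction, suppose $C_n$ with $n$ odd admits a weak IASI. An odd cycle is connected and not bipartite, so Theorem~\ref{T-SB1} applies directly and forces $C_n$ to possess at least one mono-indexed edge; no further argument is needed here.

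For the converse, I would exhibit a weak IASI of $C_n$ that has exactly one mono-indexed edge, which in particular shows $C_n$ is a weak IASI graph. Write $C_n = v_1v_2\cdots v_nv_1$ with $n$ odd. Assign a singleton set to $v_1$ and to every even-indexed vertex $v_2, v_4, \ldots, v_{n-1}$, and assign a non-singleton set to every odd-indexed vertex $v_3, v_5, \ldots, v_n$. Since $n$ is odd, $v_n$ gets a non-singleton set while its neighbour $v_1$ is mono-indexed, so the closing edge $v_nv_1$ is ``mixed''; every edge except $v_1v_2$ has exactly one mono-indexed end, and $v_1v_2$ has both ends mono-indexed. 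Hence, once the labeling is shown to be a legitimate IASI, each edge $uv$ satisfies $|g_f(uv)| = \max(|f(u)|,|f(v)|)$ by construction (a mixed edge of the form singleton-plus-set always has this property), so it is a weak IASI, and $v_1v_2$ is its only mono-indexed edge.

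The one point requiring care — and the main, though modest, obstacle — is choosing the concrete sets so that $f$ is injective and the induced edge map $g_f$ is injective. Injectivity of $f$ on the singletons is immediate (take distinct integers), and the non-singleton labels can be spread far apart in $\mathbb{N}_0$: concretely I would take them to be translates $\{0,1\}+m_i$ with the shifts $m_i$ increasing fast enough that the sumsets $f(v_i)+f(v_{i\pm1})$ lie in pairwise disjoint ranges of integers and none of them equals the singleton sum $f(v_1)+f(v_2)$. Checking this is a routine estimate using the bounds $\max(|A|,|B|)\le |A+B|\le |A|\,|B|$ from Lemma~\ref{L-Card}, which simultaneously confirms that each mixed edge has set-indexing number equal to the larger endpoint cardinality and hence that the IASI is indeed weak.
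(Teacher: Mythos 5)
Your proposal is correct, and the forward direction takes a genuinely different route from the paper's. The paper proves necessity from scratch: it fixes a singleton label on $v_1$ and propagates an alternating singleton/non-singleton pattern around the cycle, so that the odd length forces the closing edge $v_{2m+1}v_1$ to have two singleton ends. You instead observe that an odd cycle is connected and non-bipartite and simply invoke Theorem~\ref{T-SB1}, which immediately yields a mono-indexed edge. Your version is shorter and arguably cleaner, since the paper's propagation step tacitly assumes the labeling alternates (it asserts that $v_2$ \emph{must} receive a non-singleton set, which is not forced); citing Theorem~\ref{T-SB1} sidesteps that issue entirely, at the cost of depending on the earlier result rather than being self-contained. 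For sufficiency both arguments are essentially the same explicit construction --- one mono-indexed edge and an alternating pattern elsewhere; the paper labels symmetrically from the ends $v_1,v_n$ inward while you label by parity of index, and you are rather more careful than the paper about verifying injectivity of $f$ and of the induced map $g_f$ (via well-separated translates of $\{0,1\}$ and Lemma~\ref{L-Card}), a point the paper's proof passes over in silence.
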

\begin{proof}
Let $C_n:v_0v_1v_2\cdots v_n$ be an odd cycle which admits a weak IASI. Then, $n=2m+1$ for some non-negative integer $m$. Without loss of generality, assign a singleton set to the vertex $v_1$. Since $C_n$ admits a weak IASI, the vertex $v_2$ adjacent to $v_1$ must be assigned by a non-singleton set, where $k>1$. Let $v_3$ be the vertex which is adjacent to $v_2$ other than $v_1$. Then, by definition of weak IASI, $v_3$ must be assigned by a singleton set other than the one assigned to $v_1$. If $v_4$ is the vertex adjacent to $v_3$, the vertex $v_4$ must be labeled by a non-singleton set, which has not already been used. Proceeding like this, we see that the vertices $v_1, v_3,v_5,\cdots, v_{2m-1}$ have the set-indexing number 1 and $v_2,v_4,v_6,\cdots, v_{2m}$ have set-indexing numbers greater than 1. Now, since $C_n$ admits a weak IASI, the vertex $v_{2m+1}$ should necessarily be labeled by a singleton set, which has not already been used. Therefore, the edge $v_{2m+1}v_1$ is mono-indexed.

Conversely, assume that $C_n:v_1v_2,v_3,\cdots, v_{n-1}v_nv_1$ be an odd cycle which can have a mono-indexed edge. Let $f: V(G)\to 2^{\mathbb{N}_0}$ be a labeling of vertices to distinct subsets of  $\mathbb{N}_0$ defined as follows. Without loss of generality, label the vertices $v_1$ and $v_n$ by distinct singleton sets. Hence, the edge $v_1v_n$ is mono-indexed. Then, label the vertices $v_2$ and $v_{n-1}$ by distinct non-singleton sets; label the vertices $v_3$ and $v_{n-2}$ by distinct singleton sets that have not been already used;  and label the vertices $v_4$ and $v_{n-3}$  by distinct non-singleton sets, not already used before. Since $C_n$ is an odd cycle, in the succeeding steps, the vertices that are adjacent to the preceding labeled vertices by distinct singleton sets and distinct non-singleton sets, (not already used before), we reach a single vertex $v_r$, where $r={\frac{1}{2}(n-1)}$ and the vertices adjacent to whose end vertices are labeled by singleton sets (or non-singleton sets). Then, label the vertex $v_r$ by a non-singleton set (or a singleton set in the latter case), that have not been already used. This set-indexer $f$ is a weak IASI of $C_n$.
\end{proof}

Invoking Remark \ref{R-BP} and Theorem \ref{T-WUOC}, we have

\begin{remark}
The sparing number of a a cycle $C_n$ is given by 
\[ f(v)= \left\{
\begin{array}{l l}
 	1 & \quad \text{if $C_n$ is odd}\\
    0 & \quad \text{if $C_n$ is even}
 \end{array} \right.\].
\end{remark}

An interesting question here is to know the number of mono-indexed edges in cycles. Following theorem answers this question.

\begin{theorem}\label{T-NME}
Let $C_n$ be a cycle of length $n$ which admits a weak IASI, for a positive integer $n$. Then, $C_n$ has an odd number of mono-indexed edges when it is an odd cycle and has even number of mono-indexed edges, when it is an even cycle. 
\end{theorem}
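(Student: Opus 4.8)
The plan is to analyse a weak IASI on $C_n$ by looking at the "type sequence" of the vertices around the cycle: each vertex is either mono-indexed (call it type $0$) or has set-indexing number greater than $1$ (call it type $1$). Since the defining property of a weak IASI is that every edge has at least one mono-indexed end vertex, no two consecutive vertices of $C_n$ can both be of type $1$. Consequently, an edge $v_iv_{i+1}$ fails to be mono-indexed exactly when one endpoint is type $0$ and the other is type $1$, i.e. exactly at the places where the type sequence changes value. So the number of non-mono-indexed edges equals the number of "sign changes" in the cyclic binary sequence $(t_0,t_1,\dots,t_{n-1})$ of vertex types, where $t_i \in \{0,1\}$ and no two adjacent $t_i$ are both $1$.

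First I would record the elementary fact that in any cyclic $0/1$ sequence the number of indices $i$ (mod $n$) with $t_i \ne t_{i+1}$ is always even; this is because as we traverse the cycle once we must return to the starting value, so the value toggles an even number of times. Hence the number of non-mono-indexed edges of $C_n$, call it $s$, is always even. Then the number of mono-indexed edges is $n - s$. If $n$ is even, $n-s$ is even; if $n$ is odd, $n-s$ is odd. This immediately gives the claimed parity statement, and it covers both the even case (where by Remark~\ref{R-BP} a weak IASI with $s = n$, i.e. zero mono-indexed edges, exists) and the odd case (where by Theorem~\ref{T-WUOC} at least one mono-indexed edge is forced, consistent with the count being a positive odd number).

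The key steps, in order, are: (1) translate "weak IASI" into the forbidden-pattern condition "no two consecutive type-$1$ vertices"; (2) observe that non-mono-indexed edges are precisely the adjacent pairs of differing type; (3) prove the parity lemma that a cyclic $0/1$ word has an even number of adjacent unequal pairs; (4) conclude by writing the mono-indexed edge count as $n$ minus that even number and splitting on the parity of $n$. Step (3) is the only place where a genuine (though very short) argument is needed; everything else is bookkeeping. I expect the main subtlety — really the only obstacle worth a sentence — to be making precise that we may freely use the type sequence: one should note that the argument does not depend on which specific sets are assigned, only on the cardinality pattern, and that Theorem~\ref{T-WUOC} (together with the even-cycle labelling of Theorem~\ref{T-WBP}) guarantees such a weak IASI actually exists so the statement is non-vacuous in the odd case.
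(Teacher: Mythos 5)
Your proposal is correct, and it takes a genuinely different (and in fact tighter) route than the paper. The paper argues by modification from an extremal labelling: for even $n$ it starts from the bipartite labelling with zero mono-indexed edges, for odd $n$ from the labelling of Theorem~\ref{T-WUOC} with exactly one, and then observes that replacing any non-singleton vertex label by a singleton creates exactly two new mono-indexed edges, so the parity is preserved. That argument implicitly assumes every weak IASI of $C_n$ is reachable from the chosen base labelling by such replacements --- true here, since the non-mono-indexed vertices always form an independent set, but left unstated. Your argument instead works directly with an arbitrary weak IASI: you encode each vertex by whether it is mono-indexed, note that the weak IASI condition forbids two adjacent non-mono-indexed vertices (so an edge fails to be mono-indexed exactly when its endpoints have different types), and invoke the fact that a cyclic $0/1$ word has an even number of adjacent unequal pairs; hence the non-mono-indexed edge count $s$ is even and the mono-indexed count $n-s$ has the parity of $n$. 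This is cleaner and quantifies the "same parity as $n$" conclusion uniformly rather than splitting into two constructions. A minor remark: within your framework there is an even shorter route to "$s$ is even" --- since the non-mono-indexed vertices form an independent set in $C_n$, each contributes exactly its two incident edges to the non-mono-indexed edges and no edge is counted twice, so $s$ is twice the number of non-mono-indexed vertices; this is essentially the "two edges per replaced vertex" observation that powers the paper's proof, recast as a direct count. Either way, your proof is complete; the only point worth making explicit is the standard fact (used throughout the paper) that $|A+B|=\max(|A|,|B|)$ forces $\min(|A|,|B|)=1$, which justifies the forbidden-pattern condition in your step (1).
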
  
\begin{proof}
Let $C_n:v_1v_2v_3\cdots v_nv_1$ be a cycle of length $n$ which has a weak IASI. 

\noindent {\em Case 1:} If $n$ is even, then it can be considered as a bipartite graph with bipartition $(X_1,X_2)$ in such a way that every vertex of $X_1$ is mono-indexed and every vertex of $X_2$ has the set-indexing number greater than 1. If we replace the labeling set of a vertex $v$ in $X_2$ by a singleton set, then the set-indexing number of two edges incident on $v$ becomes 1. Whenever we replace the non-singleton labeling set of a vertex by a singleton set, in $X_2$, two edges attains the set-indexing number 1. Hence, $C_n$ can only have even number of mono-indexed edges.

\noindent {\em Case 2:} If $n$ is odd, then by Theorem \ref{T-WUOC}, it has atleast one mono-indexed edge. If we replace a non-singleton labeling set of a vertex $v$ in $C_n$ by a singleton set, then two edges incident on $v$ attains a set-indexing number 1.  Whenever we replace a non-singleton labeling set of any vertex of $C_n$ by a singleton set, the number of edges having set-indexing number is increased by 2. Therefore, $C_n$ has odd number of mono-indexed edges.
\end{proof}

\begin{definition}{\rm
\cite{KDJ} Let $G$ be a connected graph and let $v$ be a vertex of $G$ with $d(v)=2$. Then, $v$ is adjacent to two vertices $u$ and $w$ in $G$. If $u$ and $v$ are non-adjacent vertices in $G$, then delete $v$ from $G$ and add the edge $uw$ to $G-\{v\}$. This operation is known as an {\em elementary topological reduction} on $G$.}
\end{definition}

\begin{theorem}
Let $C_n$ be a cycle which admits a weak IASI. Then, any graph $H$, obtained by applying finite number of elementary topological reduction on $C_n$, admits a weak IASI. 
\end{theorem}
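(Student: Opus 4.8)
The plan is to first identify exactly what $H$ can be. An elementary topological reduction applied to a cycle $C_\ell$ with $\ell\ge 4$ picks a vertex $v$ of degree $2$; its two neighbours $u,w$ lie on $C_\ell$ at distance $2$ and hence are non-adjacent, so the reduction is admissible, and deleting $v$ and adding the edge $uw$ produces precisely the cycle $C_{\ell-1}$. For $\ell=3$ no reduction is possible, since the two neighbours of any vertex are adjacent. Therefore performing $k\ge 0$ elementary topological reductions on $C_n$ forces $n-k\ge 3$ and yields $H\cong C_{n-k}$. (Note in passing that the hypothesis ``$C_n$ admits a weak IASI'' is vacuous: every cycle does, as recalled below.)

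It then remains only to show that every cycle $C_m$ with $m\ge 3$ admits a weak IASI. If $m$ is even, $C_m$ is bipartite, so by Theorem \ref{T-WBP} (and Remark \ref{R-BP}) it admits a weak IASI, indeed one with no mono-indexed edge. If $m$ is odd, the construction given in the proof of Theorem \ref{T-WUOC} exhibits a weak IASI of $C_m$ having exactly one mono-indexed edge. In either case $H=C_{n-k}$ admits a weak IASI, which finishes the argument. I expect the only genuinely substantive point here to be the structural observation of the first paragraph — that a topological reduction of a cycle is again a cycle (with the base case $C_3$, where the operation is undefined, handled separately); once that is in place the result reduces to the already-established existence theorems.

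A more hands-on alternative would be to transfer a given weak IASI $f$ of $C_n$ across a single reduction at a degree-$2$ vertex $v$ with neighbours $u,w$. If $v$ is non-mono-indexed, then each of the edges $uv$ and $vw$ needs a mono-indexed endpoint other than $v$, so both $u$ and $w$ are mono-indexed and the new edge $uw$ automatically satisfies the weak condition. The obstacles in this route are that one must still re-verify injectivity of the induced edge function after the two edges at $v$ are replaced by the single edge $uw$, and one must separately treat the case where $v$ is mono-indexed while $u$ or $w$ is not — there the new edge $uw$ can violate the weak condition, so $u$ (say) must be relabelled by a singleton set, which is a translation and hence preserves the weak condition on $u$'s other incident edge but again requires an injectivity check. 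Because of this bookkeeping I would present the structural proof above, which avoids these complications entirely by invoking Theorems \ref{T-WBP} and \ref{T-WUOC}.
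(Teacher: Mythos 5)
Your proposal is correct, and it takes a genuinely different route from the paper. Your argument is purely structural: you observe that an elementary topological reduction applied to $C_\ell$ ($\ell\ge 4$) is always admissible (the two neighbours of a degree-$2$ vertex on a cycle of length at least $4$ are non-adjacent) and produces exactly $C_{\ell-1}$, that no reduction applies to $C_3$, and hence that $H\cong C_{n-k}$ for some $k$ with $n-k\ge 3$; the conclusion then follows immediately from Theorem \ref{T-WBP} in the even case and Theorem \ref{T-WUOC} in the odd case. The paper instead argues procedurally: it carries a weak IASI through the reductions one step at a time, in three stages --- first deleting end vertices of mono-indexed edges and rejoining the resulting pendant vertices, then deleting non-mono-indexed vertices whose neighbours are both mono-indexed, and finally terminating at a $C_3$ --- checking at each stage that the resulting cycle still admits a weak IASI. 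Your approach buys brevity and rigor: it isolates the one substantive fact (a reduction of a cycle is a cycle) and then delegates existence entirely to already-proved theorems, whereas the paper's stepwise labelling transfer involves bookkeeping (which vertex to delete, what set-indexing numbers the new pendant vertices have, re-verifying the weak condition on the new edge) that your second paragraph correctly identifies as avoidable. You also rightly note that the hypothesis that $C_n$ admits a weak IASI is vacuous, since every cycle does; the paper does not remark on this. One small point worth making explicit if you write this up: the paper's definition of the reduction contains a typo (``$u$ and $v$ are non-adjacent'' should read ``$u$ and $w$ are non-adjacent''), and your reading of it is the intended one.
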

\begin{proof}
We prove the theorem in three steps.

\noindent {\em Step-1:} If $C_n$ has no mono-indexed edge, then proceed to Step 2. If $C_n$ consists of mono-indexed edges, then delete one end vertex $v$ of a mono-indexed edge. Then, two edges will be deleted and $C_n-{v}$ has two pendant vertices, of which, one vertex has the set-indexing number 1 and the other vertex has a set-indexing number 1 or greater than 1. Join these vertices by an edge. The resultant graph is a cycle, which also has a weak IASI. Repeat this process until all edges having set-indexing number 1 have been deleted.

\noindent {\em Step-2:} At this stage, we have a cycle $C_j,j<i$ which contains no mono-indexed edges. The cycle $C_j$ also admits a weak IASI. Now, delete a vertex $v$ that is not mono-indexed and the two edges incident on it. Hence, $C_j-v$ has two pendant vertices each of which has the set-indexing number 1. Repeat this steps until all possible such vertices are deleted.

\noindent {\em Step-3:} Repeat Step-1 and Step-2 successively until we get a cycle $C_3$ of length 3, all of whose vertices are mono-indexed (or two vertices are mono-indexed and one vertex has set-indexing number greater than 1). Hence $C_3$ also admits a weak IASI. 
\end{proof}

Figure \ref{W-CHom} illustrates the existence of weak IASI for the graphs $H_1$ and $H_2$ obtained by applying a finite number of elementary toplogical reduction on the graph $G$ which is cycle of length 5.

\begin{figure}[h!]
   \centering
    \includegraphics[width=1\textwidth]{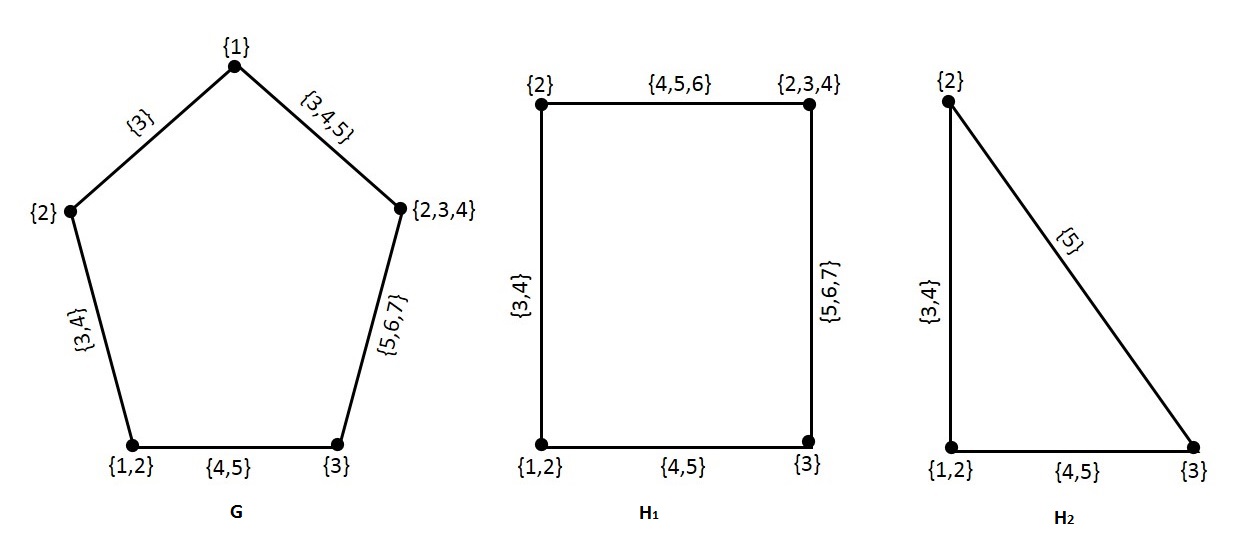}
     \caption{Existence of a weak IASI for the graphs obtained after finite number of topological reductions on $G$.\label{W-CHom}}
\end{figure}

By {\em edge contraction operation} in $G$, we mean an edge, say $e$, is removed and its two incident vertices, $u$ and $v$, are merged into a new vertex $w$, where the edges incident to $w$ each correspond to an edge incident to either $u$ or $v$. We establish the following theorem for the graphs obtained by contracting the edges of a given graph $G$.

\begin{theorem}
Let $G$ be a weak IASI graph and let $G'$ be the graph obtained by contracting an edge $e$ of $G$. $G'$ admits a weak IASI if and only if $G'$ is bipartite or $e$ is a mono-indexed edge.
\end{theorem}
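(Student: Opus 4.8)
The plan is to treat $G'=G/e$ as an abstract graph and establish the two implications separately, reading ``$e$ is a mono-indexed edge'' as ``$G$ carries a weak IASI in which the edge $e=uv$ is mono-indexed'', in parallel with Theorem \ref{T-WKBP}. Write $e=uv$ and let $w$ be the vertex of $G'$ obtained by identifying $u$ and $v$. The two facts I would record first are $N_{G'}(w)=(N_G(u)\cup N_G(v))\setminus\{u,v\}$ and $G'-w=G-\{u,v\}$, so that the edges of $G'$ not incident with $w$ are precisely the edges of $G$ meeting neither $u$ nor $v$.

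For the ``if'' direction: if $G'$ is bipartite, then $G'$ admits a weak IASI by Theorem \ref{T-WBP} and nothing further is needed. If instead $G$ has a weak IASI $f$ under which $e$ is mono-indexed, i.e. $|f(u)|=|f(v)|=1$, I would define $f'$ on $G'$ by $f'(z)=f(z)$ for $z\notin\{u,v\}$ and $f'(w)$ a singleton. Every edge of $G'$ not incident with $w$ retains its $g$-value from $f$ and hence still satisfies the weak condition; every edge $wx$ satisfies $|f'(w)+f'(x)|=|f'(x)|=\max(|f'(w)|,|f'(x)|)$, because $f'(w)$ being a singleton makes $f'(w)+f'(x)$ a translate of $f'(x)$ of the same cardinality. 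Thus $f'$ is a weak IASI of $G'$.

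For the ``only if'' direction: assume $G'$ admits a weak IASI $f'$ and $G'$ is not bipartite. I would lift $f'$ back to $G$ by setting $f(z)=f'(z)$ for $z\notin\{u,v\}$ and letting $f(u),f(v)$ be two singleton sets. Then every edge of $G$ incident with $u$ or $v$ automatically meets the weak condition (again one endpoint is a singleton, so its contribution is a cardinality-preserving translation), while the remaining edges of $G$ are exactly the edges of $G'-w$ and inherit the weak condition from $f'$; in particular $|f(u)+f(v)|=1$, so $e$ is mono-indexed. Hence $G$ admits a weak IASI in which $e$ is mono-indexed, which is the desired conclusion. (If one prefers the alternative reading in which a weak IASI of $G$ is fixed in advance, one applies Theorem \ref{T-SB1} to the non-bipartite weak IASI graph $G'$ to locate a mono-indexed edge clustered at $w$ and traces it back through the contraction.)

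The routine points to monitor are the injectivity requirements: in both constructions the newly introduced singleton labels must be chosen with sufficiently large, pairwise distinct elements so that neither the vertex map nor the induced edge map develops a collision --- the same genericity already used implicitly in the proof of Theorem \ref{T-WBP}. The only genuine obstacle is pinning down what ``$e$ is a mono-indexed edge'' means, since $e$ is an edge of $G$ (not of $G'$) and the property depends on the chosen weak IASI; once the reading ``some weak IASI of $G$ makes $e$ mono-indexed'' is adopted, both directions follow from Theorems \ref{T-WBP} and \ref{T-SB1} together with the singleton-translate observation, exactly as in the neighbouring results.
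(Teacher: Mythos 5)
Your proof is correct at the same level of rigor as the paper's, and your ``if'' direction (label the contracted vertex $w$ by a singleton and observe that every edge at $w$ then satisfies the weak condition, while all other edges are untouched) is essentially the paper's argument. The ``only if'' direction, however, takes a genuinely different route. The paper keeps the given weak IASI $f$ of $G$, pushes it onto $G'$ by labelling $w$ with the set-label of $e$, and argues by contradiction: if, say, $u$ is mono-indexed but $v$ is not, then non-bipartiteness of $G'$ is invoked to produce a neighbour $x$ of $u$ that is not mono-indexed, so that $xw$ would violate the weak condition; hence both $u$ and $v$ must already be mono-indexed under $f$. You instead start from an arbitrary weak IASI $f'$ of $G'$ and pull it back to $G$ by splitting $w$ into two singleton-labelled vertices $u$ and $v$, so that $e$ is mono-indexed by construction. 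Your version buys two things: it makes explicit the existential reading of ``$e$ is mono-indexed'' without which the statement is problematic (contracting a non-mono-indexed edge of a weakly indexed $C_4$ yields $C_3$, which still admits a weak IASI), and it avoids the paper's fragile step in which non-bipartiteness of $G'$ is claimed to force a non-mono-indexed neighbour of $u$ under the original labelling --- a claim that does not obviously follow. The price is that your converse never actually uses the hypothesis that $G'$ is non-bipartite, which reveals that the two alternatives in the statement are not exclusive; and, like the paper, you defer the injectivity of the vertex and induced edge maps to a genericity remark rather than an explicit choice of labels.
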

\begin{proof}
Let $G$ be a weak IASI graph and let $G'$ be a graph obtained from $G$ by deleting an edge $e$ of $G$ and identifying the end vertices of $e$. Label the new vertex by the set-label of the deleted edge. If $e$ is a mono-indexed edge, then deleting $e$ and identifying its end vertices will result in a new vertex $w$, labeled with set-labeling of $e$, which is mono-indexed. Then $G'$ is a weak IASI graph. 

Conversely, assume that $G'$ is a weak IASI graph.  If $G'$ is bipartite, then proof is complete. Hence, assume that $G'$ is not bipartite. Let $e=uv$ an edge in $G$ such that $G'=G\circ e$. Now, remove $e$ form $G$ and identify the vertices $u$ and $v$ to a new vertex $w$. Label the vertex $w$ by the labeling set of $uv$. We have to show that both $u$ and $v$ are mono-indexed. Assume the contrary. Since $G$ admits a weak IASI, $u$ and $v$ simultaneously can not have a set-indexing number greater than 1. Hence, without loss of generality, let $u$ is mono-indexed and $v$ is not mono-indexed. Since $G\circ e$ is non-bipartite, $u$ must be adjacent to at least one vertex, say $x$, which is not mono-indexed. Therefore, in $G\circ e$ the edge $xw$ has both end vertices which are not mono-indexed, which is a contradiction. Therefore, both $u$ and $v$ must be mono-indexed. Hence, $e$ is mono-indexed.
\end{proof}
Figure \ref{W-GEC-e} illustrates the admissibility of weak IASI by a graph obtained from contracting a mono-indexed edge in a weak IASI graph $G$. 

\begin{figure}[h!]
   \centering
    \includegraphics[width=1\textwidth]{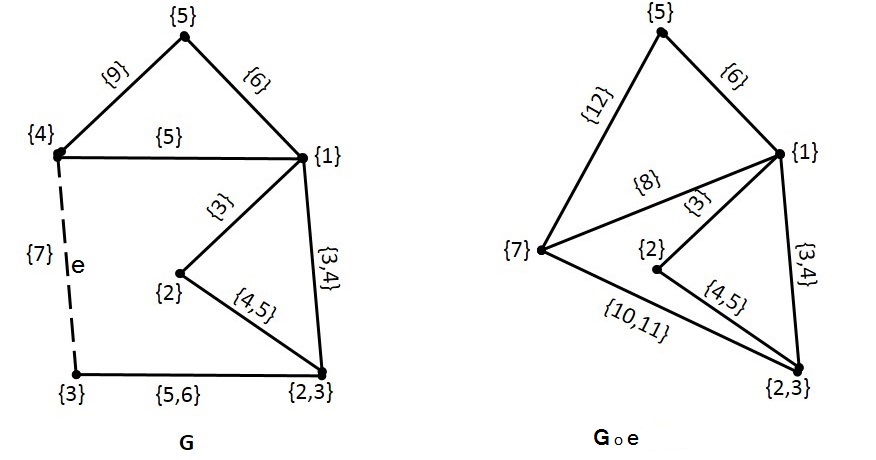}
     \caption{Existence of a weak IASI for a graph obtained contracting an edge of a weak IASI graph $G$.\label{W-GEC-e}}
\end{figure}


\section{Conclusion}\label{Sec_Conclusion}

In this paper, we have given some characteristics of the graphs which admit weak IASIs. We have identified some problems related to weak IASI graphs which demands further investigation. They are 
\begin{problem}
Discuss the existence of weak IASI for different graph operations and to determine their sparing number.
\end{problem}

\begin{problem}
Discuss the existence of weak IASI for Cartesian product of weak IASI graphs and to determine their sparing number.
\end{problem} 

\begin{problem}
Determine the necessary and sufficient conditions for the given graphs to admit weak IASIs.
\end{problem}

\begin{problem}
Determine the sparing number of a general connected (non-bipartite) graph.
\end{problem}

One may establish more properties and characteristics of weak and strong IASIs, both uniform and non-uniform. More studies may be done in the field of IASI when the ground set $X$ is finite instead of $\mathbb{N}_0$. We have formulated some conditions for some particular graph classes to admit weak and strong IASIs. The problems of establishing the necessary and sufficient conditions for various graphs and graph classes to have certain IASIs still remain unsettled. All these facts highlight a great scope for further studies in this area. 


\end{document}